\documentclass[10pt]{article}

\usepackage{amsmath, amsthm, amssymb, enumerate}
\usepackage{color}
\usepackage{datetime}
\usepackage{fancyhdr}
\usepackage{bbm}
\chardef\coloryes=0 
\chardef\isitdraft=0 

\ifnum\isitdraft=1 
 \textwidth 16truecm \textheight 8.4in\oddsidemargin0.2truecm\evensidemargin0.7truecm\voffset-.5truecm
   \def\version{2} 
   \def\eqref#1{({\ref{#1}})}                

\usepackage[color,notcite]{showkeys}

\usepackage[notcite]{showkeys}


\definecolor{labelkey}{gray}{.3}

\definecolor{refkey}{rgb}{.3,0.3,0.3}



\else

  \def\startnewsection#1#2{\section{#1}\label{#2}\setcounter{equation}{0}}   
  \textwidth 16truecm \textheight 8.4in\oddsidemargin0.2truecm\evensidemargin0.7truecm\voffset-1truecm
  \def\nnewpage{} 
\fi

\begin{document}
\def\ques{{\colr \underline{??????}\colb}}
\def\nto#1{{\colC \footnote{\em \colC #1}}}
\def\fractext#1#2{{#1}/{#2}}
\def\fracsm#1#2{{\textstyle{\frac{#1}{#2}}}}   
\def\nnonumber{}


\def\colr{{}}
\def\colg{{}}
\def\colb{{}}
\def\cole{{}}
\def\colA{{}}
\def\colB{{}}
\def\colC{{}}
\def\colD{{}}
\def\colE{{}}
\def\colF{{}}

\ifnum\coloryes=1

  \definecolor{coloraaaa}{rgb}{0.1,0.2,0.8}
  \definecolor{colorbbbb}{rgb}{0.1,0.7,0.1}
  \definecolor{colorcccc}{rgb}{0.8,0.3,0.9}
  \definecolor{colordddd}{rgb}{0.0,.5,0.0}
  \definecolor{coloreeee}{rgb}{0.8,0.3,0.9}
  \definecolor{colorffff}{rgb}{0.8,0.3,0.9}
  \definecolor{colorgggg}{rgb}{0.5,0.0,0.4}

 \def\colg{\color{colordddd}}
 \def\colb{\color{black}}
 \def\colr{\color{red}}
 \def\cole{\color{colorgggg}}

 \def\colA{\color{coloraaaa}}
 \def\colB{\color{colorbbbb}}
 \def\colC{\color{colorcccc}}
 \def\colD{\color{colordddd}}
 \def\colE{\color{coloreeee}}
 \def\colF{\collor{colorffff}}
 \def\colG{\color{colorgggg}}

\fi
\ifnum\isitdraft=1
   \chardef\coloryes=1 
   \baselineskip=17pt
\pagestyle{myheadings}
\reversemarginpar

\def\const{\mathop{\rm const}\nolimits}  
\def\diam{\mathop{\rm diam}\nolimits}    

 \def\llabel#1{\label{#1}{\ \mbox{\rm\color{red} {#1}\color{black}}}}

\def\rref#1{{\ref{#1}{\rm \tiny \fbox{\tiny #1}}}}
\def\theequation{\fbox{\bf \thesection.\arabic{equation}}}
\def\ccite#1{{\cite{#1}{\rm \tiny ({#1})}}}

\def\startnewsection#1#2{\newpage\cog \section{#1}\cob\label{#2}

\setcounter{equation}{0}
\pagestyle{fancy}

\lhead{\cob Section~\ref{#2}, #1 }
\cfoot{}
\rfoot{\thepage}
\lfoot{\cob{\today,~\currenttime}~(c75-iklt2, Version~\fbox{\version})}}
\chead{}
\rhead{\thepage}
\def\nnewpage{\newpage}

\newcounter{startcurrpage}
\newcounter{currpage}

\def\llll#1{{\rm\tiny\fbox{#1}}}
   \def\blackdot{{\color{red}{\hskip-.0truecm\rule[-1mm]{4mm}{4mm}\hskip.2truecm}}\hskip-.3truecm}
   \def\bdot{{\colC {\hskip-.0truecm\rule[-1mm]{4mm}{4mm}\hskip.2truecm}}\hskip-.3truecm}
   \def\purpledot{{\colA{\rule[0mm]{4mm}{4mm}}\colb}}
   \def\pdot{\purpledot}
\else
   \baselineskip=15pt
   \def\blackdot{{\rule[-3mm]{8mm}{8mm}}}
   \def\purpledot{{\rule[-3mm]{8mm}{8mm}}}
   \def\pdot{}
\fi

\def\tdot{\fbox{\fbox{\bf\tiny I'm here; \today \ \currenttime}}}
\def\nts#1{{\hbox{\bf ~#1~}}} 
\def\nts#1{{\colr\hbox{\bf ~#1~}}} 
\def\ntsf#1{\footnote{\hbox{\bf ~#1~}}} 
\def\ntsf#1{\footnote{\colr\hbox{\bf ~#1~}}} 
\def\bigline#1{~\\\hskip2truecm~~~~{#1}{#1}{#1}{#1}{#1}{#1}{#1}{#1}{#1}{#1}{#1}{#1}{#1}{#1}{#1}{#1}{#1}{#1}{#1}{#1}{#1}\\}
\def\biglineb{\bigline{$\downarrow\,$ $\downarrow\,$}}
\def\biglinem{\bigline{---}}
\def\biglinee{\bigline{$\uparrow\,$ $\uparrow\,$}}

\newtheorem{Theorem}{Theorem}[section]
\newtheorem{Corollary}[Theorem]{Corollary}
\newtheorem{Proposition}[Theorem]{Proposition}
\newtheorem{Lemma}[Theorem]{Lemma}
\newtheorem{Remark}[Theorem]{Remark}
\newtheorem{definition}{Definition}[section]
\def\theequation{\thesection.\arabic{equation}}
\def\endproof{\hfill$\Box$\\}
\def\square{\hfill$\Box$\\}
\def\comma{ {\rm ,\qquad{}} }            
\def\commaone{ {\rm ,\qquad{}} }         
\def\dist{\mathop{\rm dist}\nolimits}    
\def\Lip{\mathop{\rm Lip}\nolimits}    
\def\sgn{\mathop{\rm sgn\,}\nolimits}    
\def\Tr{\mathop{\rm Tr}\nolimits}    
\def\div{\mathop{\rm div}\nolimits}    
\def\supp{\mathop{\rm supp}\nolimits}    
\def\divtwo{\mathop{{\rm div}_2\,}\nolimits}    
\def\re{\mathop{\rm {\mathbb R}e}\nolimits}    
\def\indeq{\qquad{}}                     
\def\period{.}                           
\def\semicolon{\,;}                      

\def\HH{{\mathbb H}}
\def\VV{{\mathbb V}}


\title{On the Galerkin approximation and strong norm bounds for the stochastic Navier-Stokes equations with multiplicative noise}
\author{Igor Kukavica, Kerem U\u{g}urlu, and Mohammed Ziane}
\maketitle

\date{}

\begin{center}
\end{center}

\medskip

\indent Department of Mathematics, University of Southern California, Los Angeles, CA 90089\\
\indent e-mails: kukavica@usc.edu, kugurlu@usc.edu, ziane@usc.edu

\begin{abstract}
We investigate the convergence of the Galerkin approximation for the
stochastic Navier-Stokes equations in an
open  bounded domain $\mathcal{O}$ with the non-slip boundary
condition. 
We prove that
  \begin{equation*}
   \mathbb{E} \left[ \sup_{t \in [0,T]} \phi_1(\lVert  (u(t)-u^n(t))
      \rVert^2_V) \right] \rightarrow 0
  \end{equation*}
as $n \rightarrow \infty$
for any deterministic time $T > 0$ and for a
specified moment function $\phi_1(x)$ 
where
$u^n(t,x)$ denotes  the Galerkin approximation of the solution
$u(t,x)$. Also, we 
provide a result on uniform boundedness of the moment
$\mathbb{E} [ \sup_{t \in [0,T]} \phi(\lVert  u(t)
\rVert^2_V) ] $
where $\phi$  grows as a single logarithm at infinity.
Finally, we summarize results on convergence of the Galerkin
approximation 
up to a deterministic time $T$
when the $V$-norm is replaced by
the $H$-norm.

%
%
\end{abstract}

\noindent\thanks{\em Mathematics Subject Classification\/}:
35Q30, 60H15,
60H30\\
\noindent\thanks{\em Keywords:\/}
stochastic Navier-Stokes equations,
moment estimates,
Galerkin approximation

\startnewsection{Introduction}{sec1}
In this paper, we address the convergence properties of the Galerkin
approximation to the stochastic Navier-Stokes equations and obtain new
estimates on the convergence in the strong norm.

The stochastic Navier-Stokes equations (SNSE) in a smooth bounded
domain $\mathcal{O}\subseteq \mathbb{R}^2$ with a multiplicative white
noise read
  \begin{align}
   \label{main_eqn}
   &d u +
    \bigl(
     (u \cdot \nabla)u - \nu \Delta u + \nabla p 
    \bigr)
    dt
    = f dt + g(u)d\mathcal{W}
   \nonumber\\&
   \nabla \cdot u = 0
   \nonumber\\&
   u(0) = u_0
  \end{align}
\cite{BKL00,CG94,C89,CP97,DD03,FG95,FR02,GV14,M02,MR2,MS02,O06,S03}.
We consider the
Dirichlet boundary condition $u = 0$ on $\partial \mathcal{O}$.  
Here $u = (u_1, u_2)$ represents the velocity field, $p$ represents
the pressure, and $\nu$ is the viscosity, whereas $f$ stands
for the deterministic force. Also, 
$g(u)\mathcal{W} = \sum_k g_k(u)e_kW_k$ 
stands for an
infinite dimensional Brownian motion, where
each $W_k$ is the standard one dimensional Brownian motion and
$g_k(u)$ are the corresponding Lipschitz coefficients.

The study of the SNSE
 was initiated by Bensoussan and Temam in 1973 \cite{BT73}, and 
the equations have been extensively studied since then
(\cite{BF00,F08,GTW,K06,KS12,PR07}). 
The well-posedness in $L^2$ was considered by Breckner
\cite{B00}, while the existence
in Sobolev spaces $W^{1,p}$, where 
$p>2$, was obtained by Brzezniak and Peszat \cite{BP00} as well as by
Mikulevicius and Rozovsky \cite{MR1}.
Finally, the local existence in $H^{1}$ 
was proven 
in \cite{GZ09},
where a method was introduced which extends
also to less regular Sobolev spaces.
For a comprehensive treatment of the
subject of SNSE,
we refer the
reader to the books by Vishik and Fursikov \cite{FV88}, Capinski and
Cutland \cite{CC95}, and Flandoli \cite{F08}.

As in the case of the deterministic NSE, the solutions are commonly
constructed as a limit of solutions of the Galerkin system
\cite{BS07,CF85,T01}.
In \cite{B00}, Breckner proved that the solutions $u$ of the 
SNSE
can be approximated by solutions $u^n$ of the corresponding
Galerkin systems.
Namely, she proved that for all $t>0$, we have
  \begin{equation}
   {\mathbb E}
    \left[
      \Vert u(t)-u^n(t)\Vert_{H}^2
       +
      \int_{0}^{t} \Vert u(s)-u^n(s)\Vert_{V}^2 \,ds
    \right]
   \to 0
   \label{EQ13}
  \end{equation}
as $n\to \infty$
(cf.~\eqref{EQ19} and \eqref{EQ20} for the definitions of the spaces
$H$ and $V$).
In the absence of boundaries,
her results extend easily to the case of stronger norms.
More specifically, using the cancellation property
  \begin{equation}
    (B(u,u),A u)=0   
   \label{EQ15}
  \end{equation}
where $B$ is the bilinear form and $A$ the Stokes
operator, which is valid 
in the case of \emph{periodic} boundary conditions, one can easily obtain a stronger
convergence result
  \begin{equation}
   {\mathbb E}
    \left[
      \Vert u(t)-u^n(t)\Vert_{V}^2
       +
      \int_{0}^{t} \Vert u(s)-u^n(s)\Vert_{H^2}^2 \,ds
    \right]
   \to 0
   \label{EQ14}
  \end{equation}
as $n\to\infty$, under suitable assumptions on the noise.

The goal of this paper is to address
the convergence of the Galerkin approximation pointwise in time for the $V$ norm
in the case of the Dirichlet boundary conditions
when the cancellation property 
\eqref{EQ15} does not hold.
In this case, it is easy to obtain results in this
direction
up to a suitable stopping time. However, the
finiteness of the expected value of the second moment of the norm
$\Vert u(t)\Vert_{V}^2$ for any fixed {\em non-random} time $t$ is an
open problem. 
By the same token, it is not known whether
the expected value of the supremum of
$\Vert u(t)-u^n(t)\Vert_{V}^2$ 
up to a deterministic time
converges to 0 as $n\to \infty$.
A positive result in this direction was
obtained in \cite{KV}, where it
was proven that
  \begin{equation}
   {\mathbb E}
   \left[
    \sup_{0\le t\le T}
     \tilde    \phi(\Vert u\Vert_{V}^2)
   \right]
   <\infty
   \label{EQ16}
  \end{equation}
where
  \begin{equation}
   \tilde \phi(\tau)
   = \log (1 + \log (1 +\tau))
   \comma \tau\in(0,\infty)
   \period
   \label{EQ17}
  \end{equation}

The aim of this paper is twofold.
First, we strengthen the main result
in \cite{KV} by showing that \eqref{EQ16}
holds with
  \begin{equation}
   \phi(\tau)
   = \log (1+\tau)
   \label{EQ18}
  \end{equation}
instead of $\tilde\phi$ (cf.~Theorem~\ref{T02} below). 
The second goal is
to obtain the convergence of the Galerkin approximation in the $V$ norm.
Namely, we prove that
  \begin{equation}
   \mathbb{E} 
    \left [ \sup_{[0,T]} \phi({\lVert u - u^n \rVert_{V}^2})^{1-\epsilon}
    \right ]\rightarrow 0
  \end{equation}
as $n\to\infty$ for all $\epsilon>0$.


The paper is organized as follows. In Section~\ref{sec2}, we give the
theoretical background along with the deterministic and the stochastic
settings.
In Section~\ref{sec3} we state the main results on the convergence of
the Galerkin approximations 
in the $V$-norm  and on the finiteness of the logarithmic moment functions.
In Remark~\ref{R01}, we summarize results on convergence when the
$V$-norm is replaced by the $H$-norm.
Finally, Section~\ref{S4} contains the proof of the convergence of the Galerkin
approximation to the original solutions. 
The proof uses the new moment
estimate provided in Theorem~\ref{T02}.

\startnewsection{Functional Setting}{sec2}
First, we recall the deterministic and the probabilistic frameworks 
used throughout the paper. 
\subsection{Deterministic Framework}
Let $\mathcal{O}$ be a smooth bounded open connected subset of $\mathbb{R}^2$,
and let $\mathcal{V} = \{u \in C_0^{\infty}(\mathcal{O}): \nabla \cdot u = 0 \}$. Denote by $H$ and $V$ the closures of $\mathcal{V}$ in $L^2(\mathcal{O})$ and $H^{1}({\mathcal O})$ respectively. The spaces $H$ and $V$ are identified by
  \begin{align}
   \label{EQ19}
   H &= \{ u \in L^2(\mathcal{O}): \nabla \cdot u = 0, u \cdot N |_{\partial \mathcal{O}} = 0 \}, \\
   V &= \{ u \in H_0^1(\mathcal{O}): \nabla \cdot u = 0 \}
   \label{EQ20}
  \end{align}
(cf.~\cite{CF88,T01}).
Here $N$ is the outer pointing normal to $\partial \mathcal{O}$. On $H$ we denote the $L^2(\mathcal{O})$ inner product and the norm  as 
  \begin{align}
   &\langle u,v \rangle = \int_{\mathcal{O}} u \cdot v dx
   \nonumber\\&
   \lVert u \rVert_H = \sqrt{\langle u,u \rangle}
   \period
  \end{align}
Let $\mathcal{P}_H$ be the Leray-Hopf projector of $L^2(\mathcal{O})$
onto $H$. Recall that for $u \in L^2(\mathcal{O})$ we have
$\mathcal{P}_H u = (1 - \mathcal{Q}_H)u$ where 
$\mathcal{Q}_H u = \nabla \pi_1+\nabla\pi_2 $ 
and $\pi_1,\pi_2 \in H^1(\mathcal{O})$ are solutions of the problems
  \begin{align}
   &\Delta\pi_1 = \nabla \cdot u  \mbox{ in }  \mathcal{O}
   \nonumber\\&
   \pi_1 = 0 \mbox{ on }  \partial\mathcal{O}
  \end{align}
and
  \begin{align}
   &\Delta\pi_2 = 0  \mbox{ in }  \mathcal{O}
   \nonumber\\&
   \nabla \pi_2 \cdot N = u-\nabla \pi_1 \mbox{ on }  \partial\mathcal{O}
  \end{align}
(\cite{CF88,T01}).
Let
  \begin{equation}
   A = -\mathcal{P}_H \Delta
  \end{equation}
be the Stokes operator with the domain $\mathcal{D}(A)= V \cap H^2(\mathcal{O})$. The dual of $V = \mathcal{D}(A^{1/2})$ with respect to $H$ is denoted by $V'= \mathcal{D}(A^{-1/2})$. Here $A$ is defined as a bounded, linear map from $V$ to $V'$ via 
  \begin{equation*}
   \langle Au, v \rangle = \int_{\mathcal{O}} \nabla u \cdot \nabla v dx
   \comma u,v \in V,
  \end{equation*} 
with the corresponding norm defined as 
  \begin{equation*}
   \lVert u \rVert_V^2 = \langle Au, u \rangle = \langle A^{1/2}u,A^{1/2}u \rangle
   \comma u \in V
   \period
  \end{equation*}
By the theory of symmetric, compact operators applied to $A^{-1}$, there exists an orthonormal basis $\{e_k\}$ for $H$ consisting of eigenfunctions of $A$. 
The corresponding eigenvalues $\{ \lambda_k \}$ form an increasing, unbounded sequence 
  \begin{equation*}
   0 < \lambda_1 \leq \lambda_2 \leq \cdots \leq \lambda_n \leq \cdots
  \period
\end{equation*}
We also define the nonlinear term as a bilinear mapping $ V \times V$ to $V'$ via 
  \begin{equation*}
   B(u,v)= \mathcal{P}_H(u \cdot \nabla v )
   \period
  \end{equation*} 
The deterministic force $f$ is assumed to be bounded with values in $H$.
Note that the cancellation property $\langle B(u,v),v \rangle = 0$ holds for $u,v \in V$. 

\subsection{Stochastic Framework}
In this section, we recall the necessary background material for stochastic analysis in infinite dimensions needed in this paper (cf.~\cite{DZ92, DGT11, F08, PR07}). Fix a stochastic basis $\mathcal{S}= ( \Omega, \mathcal{F}, \mathbb{P}, \{ \mathcal{F}_t\}, \mathcal{W} ) $, which consists of a complete probability space ($\Omega,\mathbb{P}$), equipped with a complete right-continuous filtration $\mathcal{F}_t$, and a cylindrical Brownian motion $\mathcal{W}$, defined on a separable Hilbert space $U$ adapted to this filtration. 

Given a separable Hilbert space $X$, 
we denote by $L_2(U,X)$ the space of 
Hilbert-Schmidt operators from $U$ to $X$, 
equipped with the norm 
$\lVert G \rVert_{L_2(U,X)}= (\sum_k \lVert G \rVert^2_X)^{1/2}$ (cf.~\cite{DZ92}). 
For an $X$-valued predictable process 
$G \in L^2( \Omega;L^2_{\text{loc}}([0,\infty]);L_2(U,X) )$, we define the It\^o stochastic integral
  \begin{equation}
   \label{def1}
   \int_0^t G d\mathcal{W} = \sum_k \int_0^t G_k dW_k
  \end{equation}
which lies in the space $\mathcal{O}_X$ of $X$-valued square integrable martingales. We also recall the Burkholder-Davis-Gundy inequality: For all $p \geq 1$ we have 
  \begin{equation}
   \label{def2}
   \mathbb{E} \left [ \sup_{t \in [0,T]} \left\Vert \int_0^t G d\mathcal{W} \right\Vert^p_X \right ] 
   \leq 
   C\mathbb{E} \left [ \int_0^T  \rVert  G \lVert^2_{L_2(U,X)} \right]^{p/2} 
  \end{equation}
for some $C=C(p) > 0$.
\subsection{Conditions on the noise}
Given a pair of Banach spaces $X$ and $Y$, we denote by $\Lip_u(X,Y)$ the collection of continuous functions $h \colon [0,\infty) \times X \rightarrow Y$ which are sublinear 
  \begin{equation}
   \label{def3}
   \lVert h(t,x) \rVert_Y \leq K_Y(1 + \rVert x \lVert_X)\comma t \geq 0, x \in X  
  \end{equation}
and Lipschitz 
  \begin{equation}
   \label{def4}
   \lVert h(t,x) - h(t,y) \rVert_Y \leq K_Y \lVert x - y \rVert_X \comma t \geq 0\commaone x,y \in X
  \end{equation}
for some constant $K_Y > 0$ independent of $t$.
The noise term $g(u)d\mathcal{W}$, which is defined by
  \begin{equation}
   \label{def5}
   g = \{g_k\}_{k \geq 1}\colon [0,\infty) \times H \rightarrow L_2(U,H)
  \end{equation}
satisfies
  \begin{equation}
   \lVert g(t,x) \rVert_{L_2(U,\mathcal{D}(A^{j/2}))} \leq K_j(1 + \lVert x \rVert_{\mathcal{D}(A^{j/2})})\textrm{ for } j \in \{0,1,2\} 
  \end{equation}
and
  \begin{equation}
   \lVert g(t,x) - g(t,y) \rVert_{L_2(U,\mathcal{D}(A^{j/2}))} \leq K_j \lVert x  - y \rVert_{\mathcal{D}(A^{j/2})}\textrm{ for } j \in \{0,1,2\} 
   \period
  \end{equation}
In particular, we have
  \begin{equation}
   \label{g_con}
   g \in \Lip_u(H, L^2(U,H)) \cap \Lip_u(V,L_2(U,V)) \cap \Lip_u(\mathcal{D}(A),L_2(U,\mathcal{D}(A)))
   \period
  \end{equation}
As in \cite{KV},
denote
${\HH}=L^2([0,T];H)$
and
${\VV}=L^2([0,T];V)$.
Given $u \in L^2(\Omega; L^2([0,T];H))$ and with $g$ as above, the stochastic integral $\int_0^t g(u)d\mathcal{W}$ is a well-defined $H$-valued It\^o stochastic integral that is predictable and is such that 
  \[
   \left \langle \int_0^t g(u)d\mathcal{W}, v \right \rangle = \sum_k \int_0^t \langle g_k(u),v \rangle d\mathcal{W}_k
  \]
holds for all $v \in H$.

\subsection{Notion of a Solution}
We consider \emph{strong pathwise solutions} in the PDE sense, i.e., solutions bounded in time with values in $V$, square integrable in time with values in $\mathcal{D}(A)$, and \emph{strong} in the probabilistic sense, i.e., the driving noise and the filtration are given in advance.

\begin{definition} \label{D01}
{\rm 
Let $g$ be as in \eqref{g_con} predictable, and let $f \in L^1(\Omega;L^4([0,T);V'))$ be predictable. Assume that the initial data $u_0 \in L^4(\Omega;H) \cap L^2(\Omega;V)$ is $\mathcal{F}_0$ measurable. The pair $(u,\tau)$ is called a pathwise strong solution of the system if $\tau$ is a strictly positive stopping time, $u(\cdot \wedge \tau )$ is a predictable process in $H$ such that 
  \begin{equation}
   \label{init_ass}
   u(\cdot \wedge \tau ) \in L^2(\Omega;C([0,T];V))
  \end{equation}
with
  \begin{equation}
   u\mathbf{1}_{t\leq \tau} \in L^2(\Omega;L^2([0,T];\mathcal{D}(A)))
  \end{equation}
and if
  \begin{equation}
   \langle u(t \wedge \tau), v \rangle + \int_0^{t \wedge \tau} \bigl\langle \nu Au + B(u,u) - f, v \bigr\rangle dt = \bigl\langle u_0,v\bigr\rangle + \sum_k\int_0^{t \wedge \tau} \langle g_k(u),v \rangle dW_k
  \end{equation}
holds for every $v \in H$. Moreover, $(u,\xi)$ is called a maximal pathwise strong solution if $\xi$ is a strictly positive stopping time and there exists a non-decreasing sequence
of  stopping times $\tau_n$ such that 
$\tau_n \rightarrow \xi$ and $(u, \tau_n)$ is a local strong solution and 
  \begin{equation}
   \label{EQ11}
   \sup_{t \in [0,\tau_n]} \lVert u \rVert^2_V + \nu\int_0^{\tau_n} \lVert Au \rVert^2_H dt \geq n
  \end{equation}
on the set $\{ \xi < \infty \}$. Such a solution is called global if $\mathbb{P}(\xi < \infty ) = 0$.
}
\end{definition}

We proceed with the definition of the Galerkin system.

\begin{definition}
\label{D02}
{\rm
An adapted process $u^n$ in $C([0,T];H_n)$,
where  $H_n={\mathcal L}\{e_1,\ldots,e_n\}$,
is a solution to the Galerkin system of order $n$ if for any $v$ in $H_n$ 
   \begin{align}
   &d\langle u^n,v \rangle + \langle \nu Au^n + B(u^n),v \rangle dt = \langle f,v \rangle dt + \sum_{k=1}^\infty \langle g_k(u^n), v \rangle dW_k
   \nonumber\\&
   \langle u^n(0),v \rangle = \langle u_0,v \rangle
   \period
   \label{galerkin_long}
   \end{align}
We may also rewrite  \eqref{galerkin_long} as equations in $H_n$, i.e.,
  \begin{align}
   &du^n + (\nu A u^n + P_n B (u^n))dt = P_n f dt + \sum_{k=1}^\infty P_n g_k (u^n)dW_k
   \nonumber\\&
   u^n(0) = P_n u_0 = u_0^n
   \period
   \label{galerkin_short}
  \end{align}
}
\end{definition}

\startnewsection{The Main Results}{sec3}
Our main result establishes the convergence of Galerkin approximations
in the $V$ norm up to any deterministic time $T$.

\cole
\begin{Theorem}
\label{T01}
Let $\epsilon \in (0,1)$ and let $T>0$ be arbitrary. Suppose that $u$ is a solution to the equation~\eqref{main_eqn}, and let $u^n$ be the corresponding Galerkin approximation.
Then we have 
  \begin{equation}
   \mathbb{E} \left [ \sup_{[0,T]} \phi_1({\lVert u - u^n \rVert_{V}^2}) \right ]\rightarrow 0
  \end{equation}
as $n \rightarrow \infty$,
where $\phi_1(x) = (\log ( 1 + x))^{1-\epsilon}$,

\end{Theorem}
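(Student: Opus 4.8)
The plan is to combine a strong convergence estimate that holds only up to a suitable stopping time with the uniform logarithmic moment bound of Theorem~\ref{T02}, exploiting the subcritical power $1-\epsilon$ to upgrade the latter into genuine convergence at the fixed deterministic time $T$. Write $w^n=u-u^n$ and decompose $w^n=Q_n u+r^n$, where $Q_n=I-P_n$ and $r^n=P_n u-u^n\in H_n$; since $Q_n u(t)\to 0$ in $V$ and in $\mathcal{D}(A)$ for a.e.\ $(t,\omega)$, the projection error is harmless and the analysis reduces to $r^n$. For $M>0$ I introduce the stopping time
\[
\tau_M^n=\inf\{t\ge 0:\Vert u(t)\Vert_V^2\ge M \text{ or } \Vert u^n(t)\Vert_V^2\ge M\}\wedge T,
\]
so that both $u$ and $u^n$ are bounded by $\sqrt M$ in $V$ on $[0,\tau_M^n]$.

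First I would establish the strong convergence up to $\tau_M^n$, namely
\[
\mathbb{E}\Bigl[\sup_{[0,\tau_M^n]}\Vert w^n\Vert_V^2+\nu\int_0^{\tau_M^n}\Vert Aw^n\Vert_H^2\,ds\Bigr]\to 0\qquad(n\to\infty),
\]
for each fixed $M$. Applying the It\^o formula to $\Vert r^n\Vert_V^2=\langle Ar^n,r^n\rangle$, using $B(u,u)-B(u^n,u^n)=B(u,w^n)+B(w^n,u^n)$ and $Ar^n\in H_n$, the only term without a favorable sign is $\langle Ar^n,\,B(u,w^n)+B(w^n,u^n)\rangle$. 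Here the absence of the cancellation property~\eqref{EQ15} is immaterial: on $[0,\tau_M^n]$ the factors $u$ and $u^n$ are bounded by $\sqrt M$ in $V$, so the two-dimensional interpolation inequalities $\Vert a\Vert_{L^4}\lesssim\Vert a\Vert_H^{1/2}\Vert a\Vert_V^{1/2}$ and $\Vert\nabla b\Vert_{L^4}\lesssim\Vert b\Vert_V^{1/2}\Vert Ab\Vert_H^{1/2}$ together with Young's inequality bound this term by $\tfrac{\nu}{2}\Vert Ar^n\Vert_H^2+C(M)\Vert w^n\Vert_V^2$, the first piece being absorbed into the dissipation on the left. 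The Burkholder--Davis--Gundy inequality~\eqref{def2} and the Lipschitz bound on $g$ control the stochastic integral, and a Gronwall argument then reduces everything to the Galerkin projection errors $\int_0^{\tau_M^n}(\Vert Q_n u\Vert_V^2+\Vert AQ_n u\Vert_H^2)\,ds$. These tend to $0$ in expectation by dominated convergence: bounding $\tau_M^n$ above by the $n$-independent stopping time $\sigma_M=\inf\{t:\Vert u(t)\Vert_V^2\ge M\}\wedge T$ and using $\mathbb{E}\int_0^{\sigma_M}\Vert Au\Vert_H^2\,ds<\infty$ supplies the required domination.

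The heart of the argument is the passage to the deterministic time $T$. Since $\phi_1$ is increasing and $\Vert w^n\Vert_V^2\le 2\Vert u\Vert_V^2+2\Vert u^n\Vert_V^2$, writing $X=\sup_{[0,T]}\Vert u\Vert_V^2$ and $X^n=\sup_{[0,T]}\Vert u^n\Vert_V^2$ I split
\[
\mathbb{E}\Bigl[\sup_{[0,T]}\phi_1(\Vert w^n\Vert_V^2)\Bigr]\le\mathbb{E}\Bigl[\sup_{[0,T]}\phi_1(\Vert w^n\Vert_V^2)\,\mathbf{1}_{\{\tau_M^n=T\}}\Bigr]+\mathbb{E}\bigl[\phi_1(2X+2X^n)\,\mathbf{1}_{\{\tau_M^n<T\}}\bigr].
\]
On $\{\tau_M^n=T\}$ the quantity $\sup_{[0,T]}\phi_1(\Vert w^n\Vert_V^2)$ is bounded by the constant $\phi_1(4M)$, while the stopping-time convergence above gives $\sup_{[0,T]}\Vert w^n\Vert_V^2\,\mathbf{1}_{\{\tau_M^n=T\}}\to 0$ in probability, so for each fixed $M$ the first term tends to $0$ by bounded convergence. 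For the second term I would use $\phi_1=\phi^{1-\epsilon}$ with $\phi(\tau)=\log(1+\tau)$ and apply H\"older with exponents $1/(1-\epsilon)$ and $1/\epsilon$, obtaining
\[
\mathbb{E}\bigl[\phi(2X+2X^n)^{1-\epsilon}\,\mathbf{1}_{\{\tau_M^n<T\}}\bigr]\le\bigl(\mathbb{E}[\phi(2X+2X^n)]\bigr)^{1-\epsilon}\,\mathbb{P}(\tau_M^n<T)^{\epsilon}.
\]
By the subadditivity $\log(1+a+b)\le\log(1+a)+\log(1+b)$ and Theorem~\ref{T02}, applied to $u$ and, \emph{uniformly in $n$}, to $u^n$, the first factor is bounded independently of $n$; and since $\{\tau_M^n<T\}\subseteq\{X\ge M\}\cup\{X^n\ge M\}$, Chebyshev's inequality together with Theorem~\ref{T02} gives $\mathbb{P}(\tau_M^n<T)\le C/\log(1+M)$, again uniformly in $n$. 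A double-limit argument---first fix $M$ large so that the second term is uniformly small, then send $n\to\infty$---then completes the proof.

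The main obstacle is not the stopping-time estimate but this final step: everything hinges on the moment bound of Theorem~\ref{T02} holding \emph{uniformly in $n$} for the Galerkin approximations, and on the exponent being strictly below $1$. The power $1-\epsilon$ is precisely what lets the H\"older split extract a positive power $\mathbb{P}(\tau_M^n<T)^{\epsilon}$ of a probability that the single-logarithm bound forces to $0$ as $M\to\infty$; with $\epsilon=0$ the identical scheme would merely reproduce the a priori moment bound of Theorem~\ref{T02} rather than convergence.
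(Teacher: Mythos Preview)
Your proposal is correct and follows essentially the same strategy as the paper: strong convergence up to a stopping time, plus the uniform single-logarithm moment bound of Theorem~\ref{T02} applied to both $u$ and the $u^n$, combined with the subcritical exponent $1-\epsilon$ to obtain uniform integrability. The paper streamlines the two steps you carry out by hand---it cites \cite{GZ09} (Theorem~\ref{stop_time}) for the stopping-time convergence and packages it as convergence in probability (Lemma~\ref{V_prob}), and then invokes the de~la~Vall\'ee--Poussin criterion (with $G(t)=t^{1/(1-\epsilon)}$) in place of your explicit H\"older/Chebyshev split---but the underlying argument is the same.
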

\colb


The main tool used in the proof is the following improvement of the main
result in \cite{KV} of independent interest.

\cole
\begin{Theorem} 
\label{T02}
Let $u_0$, $f$, and $g$ be as in Definition~\ref{D01} and 
suppose that $u$ is the solution to the equation~\eqref{main_eqn}. Then we have 
  \begin{equation}
   \mathbb{E}\left[\sup_{[0,T]} \phi(\lVert u \rVert_V^2)\right] \leq C(f,g,u_0,T),
   \label{EQ10}
  \end{equation}
where $\phi(x) = \log(1 + x)$.
\end{Theorem}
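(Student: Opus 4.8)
The plan is to run a pathwise energy estimate for $\Vert u\Vert_V^2$ and then feed it into an It\^o computation for $\phi(\Vert u\Vert_V^2)=\log(1+\Vert u\Vert_V^2)$, exploiting the fact that $\phi'(x)=1/(1+x)$ exactly tames the quadratic nonlinearity that appears in the absence of the cancellation \eqref{EQ15}. Because global-in-time $\mathcal{D}(A)$ regularity with finite deterministic-time moments is not available a priori, I would first localize: set $\tau_R=\inf\{t:\Vert u(t)\Vert_V^2\geq R\}\wedge T$, carry out all estimates up to $\tau_R$ with constants independent of $R$, and recover the claim by monotone convergence as $R\to\infty$ (the solution being global, $\tau_R\uparrow T$ a.s.).

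First I would apply the It\^o formula to $\Vert u\Vert_V^2=\langle Au,u\rangle$, giving
$$
d\Vert u\Vert_V^2+2\nu\Vert Au\Vert_H^2\,dt=\bigl(-2\langle B(u,u),Au\rangle+2\langle f,Au\rangle+\Vert g(u)\Vert_{L_2(U,V)}^2\bigr)\,dt+2\langle A^{1/2}g(u),A^{1/2}u\rangle\,d\mathcal{W}.
$$
The only dangerous term is the nonlinear one. Using the two-dimensional bound $|\langle B(u,u),Au\rangle|\leq C\Vert u\Vert_{L^4}\Vert\nabla u\Vert_{L^4}\Vert Au\Vert_H$ together with the Ladyzhenskaya inequality and the elliptic regularity $\Vert u\Vert_{H^2}\leq C\Vert Au\Vert_H$ for the Stokes operator, one obtains $|\langle B(u,u),Au\rangle|\leq C\Vert u\Vert_H^{1/2}\Vert u\Vert_V\Vert Au\Vert_H^{3/2}$, and then by Young's inequality
$$
|\langle B(u,u),Au\rangle|\leq\frac{\nu}{2}\Vert Au\Vert_H^2+C\Vert u\Vert_H^2\Vert u\Vert_V^4.
$$
The crucial point is that the surviving drift is \emph{exactly} quadratic in $X:=\Vert u\Vert_V^2$, of order $\Vert u\Vert_H^2X^2$; this is where the improvement over \cite{KV} originates.

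Next I would apply It\^o to $\phi(X)=\log(1+X)$. Since $\phi''<0$, the It\^o correction has a favorable sign and may be discarded for an upper bound, while $\phi'(X)=1/(1+X)$ converts the offending term into $\Vert u\Vert_H^2X^2/(1+X)\leq\Vert u\Vert_H^2X$. The forcing and noise-growth contributions reduce to integrable terms after the factor $1/(1+X)$ (the noise term becomes $O(1)$ since $\Vert g(u)\Vert_{L_2(U,V)}^2\leq C(1+X)$), and the new martingale $N(t)=\int_0^t(1+X)^{-1}\,dM$ has quadratic variation bounded by $\int_0^t(1+X)^{-2}C(1+X)X\,ds\leq CT$ uniformly. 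Taking the supremum in $t$, then expectations, and applying the Burkholder-Davis-Gundy inequality \eqref{def2} to $N$ yields
$$
\mathbb{E}\Bigl[\sup_{[0,T]}\phi(X)\Bigr]\leq\mathbb{E}[\phi(X(0))]+C\,\mathbb{E}\Bigl[\int_0^T\Vert u\Vert_H^2\Vert u\Vert_V^2\,ds\Bigr]+C(T).
$$

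It remains to show the right-hand side is finite, and this is where I expect the real work to lie. The term $\mathbb{E}[\int_0^T\Vert u\Vert_H^2\Vert u\Vert_V^2\,ds]$ is controlled by the fourth-moment energy estimate in $H$: applying It\^o to $\Vert u\Vert_H^4$ and using the cancellation $\langle B(u,u),u\rangle=0$ (which \emph{does} hold under Dirichlet conditions) together with $u_0\in L^4(\Omega;H)$ and the sublinear growth of $g$, one gets $\mathbb{E}[\sup_{[0,T]}\Vert u\Vert_H^4]+\mathbb{E}[\int_0^T\Vert u\Vert_H^2\Vert u\Vert_V^2\,ds]\leq C(f,g,u_0,T)$, while $\mathbb{E}[\phi(X(0))]\leq\mathbb{E}[\Vert u_0\Vert_V^2]<\infty$ since $u_0\in L^2(\Omega;V)$. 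The main obstacle is thus not any single estimate but their combination: one must push the nonlinearity to exactly the quadratic level $X^2$ so that the single logarithm brings it down to the energy-integrable quantity $\int_0^T\Vert u\Vert_V^2\,ds$ (weighted by $\Vert u\Vert_H^2$), and one must arrange the localization so that every bound is uniform in $R$ before letting $R\to\infty$.
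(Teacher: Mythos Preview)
Your proposal is correct and follows essentially the same route as the paper's proof: apply It\^o to $\phi(\Vert u\Vert_V^2)$, use the two-dimensional trilinear estimate $|\langle B(u,u),Au\rangle|\le C\Vert u\Vert_H^{1/2}\Vert u\Vert_V\Vert Au\Vert_H^{3/2}$ with Young's inequality so that the factor $\phi'(X)=1/(1+X)$ reduces the surviving drift to $\Vert u\Vert_H^2\Vert u\Vert_V^2$, control the latter via the $L^4(\Omega;H)$ energy estimate, handle the stochastic integral by BDG (its quadratic variation being uniformly $O(T)$), and pass to the limit in the localizing stopping times by monotone convergence. The only cosmetic differences are that the paper localizes with the stopping times $\tau_m$ from the existence theorem rather than your hitting times $\tau_R$, and it bounds the $\phi''$ term directly instead of discarding it by sign; both choices are equivalent.
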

\colb




\begin{Remark}
\label{R01}
\rm
When considering the convergence of the Galerkin 
approximations $u^n$ in $H$, a
stronger results may be obtained.
Namely, let $u$ be the solution to the equation~\eqref{main_eqn}, 
and let $u^n$ be the corresponding Galerkin approximation. Assume that
$f \in L^{2k}(\Omega;L^{2k}([0,\infty);V'))$ 
and 
$u_0 \in L^{2k+2}(\Omega;H) \cap L^2(\Omega;V)$
for all $k\in{\mathbb N}$.
Then we have
  \begin{equation}
   \mathbb{E} \left [ \sup_{[0,T]} \lVert u - u^n \rVert_{H}^{m} \right ] \rightarrow 0
    \hbox{~~as $n\to\infty$}
   \comma m\in{\mathbb N}
   \label{EQ22}
  \end{equation}
for any deterministic time $T>0$.
Indeed, let $k\in{\mathbb N}$.
By \cite{FG95}, we have
  \begin{align} 
   \mathbb{E}\left[\sup_{[0,T]} \lVert u \rVert_H^{2k} \right] 
   +
   \mathbb{E}\left[ \int_0^T \lVert u \rVert_V^2\lVert u \rVert_H^{2k-2} ds\right] &\leq C(k,\lVert u_0 \rVert_H^{2k}, \lVert f \rVert_{V'}^{2k}, T)
   \label{EQ24}
   \period
  \end{align}
Also, by the same argument applied to the Galerkin system, we get
  \begin{align} 
   \mathbb{E}\left[\sup_{[0,T]} \lVert u^n \rVert_H^{2k} \right] 
   +
   \mathbb{E}\left[ \int_0^T \lVert u^n \rVert_V^2\lVert u^n \rVert_H^{2k-2} ds\right] &\leq C(k,\lVert u_0 \rVert_H^{2k}, \lVert f \rVert_{V'}^{2k}, T)
   \period
   \label{EQ25}
  \end{align}
Then, we have using 
  \begin{equation}
    \log ( 1 + x ) \leq x\comma x \geq 0   
   \label{EQ21}
  \end{equation}
Recall that, by \cite{B00}, we have
  \begin{equation}
   \mathbb{P}
         \left(\sup_{t \in [0,T]} \lVert u -u^n \rVert_H \ge \delta\right) 
   \to0
   \label{EQ26}
  \end{equation}
while, by \eqref{EQ24} and \eqref{EQ25},
  \begin{equation}
   \mathbb{E}\left[\sup_{[0,T]} \lVert u - u^n \rVert_H^{2k}\right] \leq 2^{2k}\left(\mathbb{E} \left[\sup_{[0,T]} \lVert u \rVert_H^{2k}\right] + \mathbb{E}\left[\sup_{[0,T]} \lVert u^n \rVert_H^{2k}\right]\right)
   \label{EQ23}
   \period
  \end{equation}
Using the uniform integrability principle
with \eqref{EQ26} and
\eqref{EQ23}, we get
  \begin{equation}
   \mathbb{E}\left[
                   \sup_{[0,T]} \lVert u - u^n \rVert_H^{2k(1-\epsilon)}
             \right] \rightarrow 0
  \end{equation}
as $n \rightarrow \infty$, for every $k\in{\mathbb N}$
and $\epsilon\in(0,1]$, and \eqref{EQ22}
is proven.

It is possible to obtain more precise information 
regarding the convergence in $H$.
Assume first that
  \begin{equation}
   \Vert
    g(t,x)
   \Vert_{{\mathbb H}}
    \le
    C 
   \period
   \label{EQ32}
  \end{equation}
Then estimating
${\mathbb E}[\sup_{[0,T]}\Vert u\Vert_{H}^{2k}]$
for $k=1,2,3,\ldots$ and keeping the dependence on $k$,
we get
  \begin{equation}
   {\mathbb E}\left[\sup_{[0,T]} \exp(\Vert u\Vert_{H}/K)\right]
   \le C
   \label{EQ33}
  \end{equation}
for a sufficiently large constant $K$
(cf.~also \cite[Lemma~3.1]{G} and \cite{KS12} for a different approach).
As in \cite{B00},
we get
  \begin{equation}
   {\mathbb E}\left[\sup_{[0,T]} \exp(\Vert u-u_n\Vert_{H}/K')\right]
   \to 0
   \label{EQ34}
  \end{equation}
as $n\to\infty$,
where $K'$ is any constant larger than $K$.
More generally, if
  \begin{equation}
   \Vert
    g(t,x)
   \Vert_{{\mathbb H}}
    \le
    C 
   (1+ \Vert x\Vert_{{\mathbb H}}^{\alpha})
   \label{EQ35}
  \end{equation}
where $\alpha\in[0,1)$, then instead
  \begin{equation}
   {\mathbb E}\left[\sup_{[0,T]} \exp(\Vert u-u_n\Vert_{H}^{2(1-\alpha)}/K')\right]
   \to 0
  \end{equation}
as $n\to\infty$.
\end{Remark}

\startnewsection{Galerkin Convergence in $V$}{S4}
In this section, we prove
the first main result, Theorem~\ref{T01}.
We first recall the existence result from \cite{GZ09}.

\cole
\begin{Theorem}\cite{GZ09}
\label{stop_time}
Let $\{ u^n \}$ be the sequence of solutions of \eqref{galerkin_long}, 
and let $u$ be the solution to the equation~\eqref{main_eqn}  with $g$, $f$, and $u_0$ as in Definition~\ref{D01}. Then there exists a global, maximal pathwise strong solution $(u,\xi)$. 
Namely, there exists an increasing sequence of strictly positive stopping times $\{\tau_m\}_{m\geq 0}$ 
converging to $\xi$, for which $\mathbb{P}(\xi < \infty) = 0$. Moreover, there exists an increasing sequence of measurable subsets $\{\Omega_s\}_{s \ge 1}$ with  $\Omega_s \uparrow \Omega$ as $s \rightarrow \infty$ 
so that on any $\Omega_s$ we have 
  \begin{equation}
   \lim_{n \rightarrow \infty} \mathbb{E}\left[ \mathbbm{1}_{\Omega_s} \left( \sup_{t \in [0,\tau_m]} \lVert u - u^n \rVert^2_V  + \nu \int_0^{\tau_m} \lVert A(u -u^n) \rVert_H^2 dt \right) \right] = 0
  \end{equation}
as $n\rightarrow \infty$ for any $\tau_m$.
\end{Theorem}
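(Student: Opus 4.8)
The plan is to construct $(u,\xi)$ as a pathwise (probabilistically strong) solution by passing to the limit in the Galerkin system \eqref{galerkin_short}, and to obtain the stated convergence by localizing with stopping times $\tau_m$ and exhaustion sets $\Omega_s$ on which all the relevant a priori bounds become uniform. Since we insist on \emph{pathwise} solutions, I would not argue by martingale/compactness alone (which only produces martingale solutions); instead I would combine uniform estimates for $u^n$ with the Gy\"{o}ngy--Krylov criterion (convergence in probability of the Galerkin sequence promotes to a strong solution), or equivalently establish a direct Cauchy estimate for $\{u^n\}$ up to stopping times.

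The first step is the a priori estimates. Applying It\^o's formula to $\lVert u^n\rVert_H^2$ and using the cancellation $\langle B(u^n,u^n),u^n\rangle=0$ together with the sublinear growth \eqref{g_con} of $g$ controls $\sup_{[0,T]}\lVert u^n\rVert_H^2$ and $\int_0^T\lVert u^n\rVert_V^2\,ds$ uniformly in $n$. The delicate estimate is in the $V$ norm: applying It\^o to $\lVert u^n\rVert_V^2=\langle Au^n,u^n\rangle$ produces the term $\langle B(u^n,u^n),Au^n\rangle$, which does \emph{not} vanish under Dirichlet conditions. In two dimensions I would bound it by the interpolation inequality
\[
 |\langle B(u^n,u^n),Au^n\rangle|
 \le C\lVert u^n\rVert_H^{1/2}\lVert u^n\rVert_V\lVert Au^n\rVert_H^{3/2}
 \le \tfrac{\nu}{2}\lVert Au^n\rVert_H^2 + C\lVert u^n\rVert_H^2\lVert u^n\rVert_V^4 .
\]
Writing the last term as $\bigl(C\lVert u^n\rVert_H^2\lVert u^n\rVert_V^2\bigr)\cdot\lVert u^n\rVert_V^2$ exhibits it as $a^n(t)\,\lVert u^n\rVert_V^2$ with coefficient $a^n(t)=C\lVert u^n\rVert_H^2\lVert u^n\rVert_V^2$ that is $\mathbb{P}$-a.s.\ integrable on $[0,T]$ by the energy bound, while the stochastic terms are handled by the Burkholder--Davis--Gundy inequality \eqref{def2} and \eqref{g_con}. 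A stochastic Gronwall argument, treating $\int_0^\cdot a^n\,ds$ as a given a.s.-finite coefficient, then yields the pathwise enstrophy bound
\[
 \sup_{[0,t]}\lVert u^n\rVert_V^2 + \nu\int_0^t\lVert Au^n\rVert_H^2\,ds
 \le \Bigl(\lVert u_0\rVert_V^2 + \text{(data terms)}\Bigr)
 \exp\!\Bigl(C\!\int_0^t a^n(s)\,ds\Bigr),
\]
finite a.s.\ but with a random, possibly non-integrable, exponential factor.

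For the convergence, I would set $w^n=u-u^n$ and subtract the two equations, the difference picking up the projection errors $(I-P_n)f$, $(I-P_n)Au$, $(I-P_n)B(u,u)$, $(I-P_n)g(u)$ (all tending to $0$ in the appropriate norms) and the nonlinear discrepancy $B(u,u)-P_nB(u^n,u^n)$. Applying It\^o to $\lVert w^n\rVert_V^2$ and grouping terms, the nonlinear contribution is again split by the 2D interpolation bound so that the non-projection part is absorbed into $\tfrac\nu2\lVert Aw^n\rVert_H^2$ against a coefficient built from $\lVert u\rVert_V^2,\lVert u^n\rVert_V^2$. I would take $\tau_m$ to be the first time the quantity in \eqref{EQ11} reaches $m$, and $\Omega_s$ to be the event on which $\sup_{[0,T]}\lVert u\rVert_V^2$, $\int_0^T\lVert Au\rVert_H^2\,ds$ and the analogous Galerkin quantities are bounded by $s$; by the a.s.\ enstrophy bound one has $\Omega_s\uparrow\Omega$. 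On $\Omega_s$ and up to $\tau_m$ the Gronwall coefficient is dominated by a deterministic constant, so after multiplying by $\mathbbm{1}_{\Omega_s}$, taking expectations, and invoking Gronwall, the right-hand side is controlled by the expectation of the (now uniformly integrable) projection errors, which tend to $0$ by dominated convergence, giving the displayed limit.

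Finally, global existence, i.e.\ $\mathbb{P}(\xi<\infty)=0$, follows because the a.s.\ finiteness of $\sup_{[0,T]}\lVert u\rVert_H^2$ and $\int_0^T\lVert u\rVert_V^2\,ds$ from the energy estimate makes $\int_0^T a(s)\,ds$ a.s.\ finite, so the enstrophy bound above precludes finite-time blow-up of the quantity in \eqref{EQ11}; hence $\tau_m\uparrow\infty=\xi$ almost surely. The main obstacle throughout is precisely the absence of the cancellation \eqref{EQ15}: the term $\langle B(u,u),Au\rangle$ forces the enstrophy estimate through a \emph{random} exponential Gronwall factor, which is adequate for almost-sure global existence and for the localized convergence on $\Omega_s$, but is exactly what obstructs unconditional moment bounds and is the reason the localization by $\Omega_s$ and $\tau_m$ cannot be dispensed with at this stage.
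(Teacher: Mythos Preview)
The paper does not prove this theorem; it is quoted from \cite{GZ09} (note the citation placed immediately after the theorem heading) and used as a black box in the proofs of Lemma~\ref{V_prob} and Theorem~\ref{T02}. There is consequently no proof in the present paper against which to compare your proposal.

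For what it is worth, your sketch is a reasonable outline of the strategy one finds in \cite{GZ09}: uniform $H$-level energy estimates, the $V$-level It\^o computation in which the 2D interpolation bound
\[
|\langle B(v,v),Av\rangle|\le C\lVert v\rVert_H^{1/2}\lVert v\rVert_V\lVert Av\rVert_H^{3/2}
\]
produces a random Gronwall coefficient, a Cauchy/comparison estimate for the Galerkin differences, and localization by the stopping times $\tau_m$ of \eqref{EQ11} together with exhaustion sets $\Omega_s$. One point you should tighten: as written you define $\Omega_s$ using $\sup_{[0,T]}\lVert u\rVert_V^2$ and $\int_0^T\lVert Au\rVert_H^2\,ds$, i.e.\ in terms of the very solution $u$ whose existence is being established, which is circular. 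In \cite{GZ09} the exhaustion sets are built from quantities available \emph{before} passing to the limit (essentially from the size of the initial data and the forcing, which control the Galerkin sequence uniformly in $n$); the bounds on $u$ are then inherited a posteriori. With that adjustment, your argument matches the intended one.
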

\colb

First, we establish the convergence of the Galerkin approximations in probability.

\cole
\begin{Lemma}\label{V_prob} Let $u$ and $u^n$ be defined as in Definitions~\ref{D01} and~\ref{D02}. 
Then for any deterministic time~$T>0$,
the Galerkin approximations $u^n$ converge in probability 
with respect to the $V$ norm 
to the solution of the equation~\eqref{main_eqn},
i.e.,
for any $\delta>0$ we have
  \begin{equation}
   \mathbb{P} \left( \sup_{t \in [0,T]} \lVert u - u^n \rVert_V^2 \ge \delta \right) \to 0
  \end{equation}
as $n\to\infty$.
\end{Lemma}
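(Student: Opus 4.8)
The plan is to deduce convergence in probability up to the deterministic time $T$ from the stopping-time convergence in Theorem~\ref{stop_time}, by localizing both in $\omega$ (via the exhausting sets $\Omega_s$) and in time (via the stopping times $\tau_m$). The difficulty is that Theorem~\ref{stop_time} only supplies the $L^1$-convergence of the $V$-norm supremum up to the \emph{random} time $\tau_m$, and only on $\Omega_s$; so I must argue that replacing $\sup_{[0,\tau_m]}$ by $\sup_{[0,T]}$, and passing from $\Omega_s$ to $\Omega$, costs only an arbitrarily small probability.

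Fix $\delta>0$ and $\eta>0$. First I would exploit globality: since $(u,\xi)$ is a global maximal solution we have $\mathbb{P}(\xi<\infty)=0$, and since $\tau_m\uparrow\xi$ the family $\{\tau_m<T\}$ is decreasing with $\bigcap_m\{\tau_m<T\}\subseteq\{\xi\le T\}\subseteq\{\xi<\infty\}$; continuity from above then gives $\mathbb{P}(\tau_m<T)\to 0$, so I may fix $m$ with $\mathbb{P}(\tau_m<T)<\eta/3$. Next, using $\Omega_s\uparrow\Omega$, I fix $s$ with $\mathbb{P}(\Omega\setminus\Omega_s)<\eta/3$.

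With $s$ and $m$ fixed, on the event $\Omega_s\cap\{\tau_m\ge T\}$ one has $\sup_{t\in[0,T]}\lVert u-u^n\rVert_V^2\le\sup_{t\in[0,\tau_m]}\lVert u-u^n\rVert_V^2$, so by the Markov inequality
\begin{equation*}
\mathbb{P}\left(\Omega_s\cap\{\tau_m\ge T\}\cap\Bigl\{\sup_{t\in[0,T]}\lVert u-u^n\rVert_V^2\ge\delta\Bigr\}\right)\le\frac{1}{\delta}\,\mathbb{E}\left[\mathbbm{1}_{\Omega_s}\sup_{t\in[0,\tau_m]}\lVert u-u^n\rVert_V^2\right].
\end{equation*}
By Theorem~\ref{stop_time} (discarding the nonnegative $\mathcal{D}(A)$ term) the right-hand side tends to $0$ as $n\to\infty$, so for all large $n$ this probability is $<\eta/3$. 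Combining the three contributions through the union bound
\begin{equation*}
\mathbb{P}\Bigl(\sup_{t\in[0,T]}\lVert u-u^n\rVert_V^2\ge\delta\Bigr)\le\mathbb{P}(\Omega\setminus\Omega_s)+\mathbb{P}(\tau_m<T)+\mathbb{P}\left(\Omega_s\cap\{\tau_m\ge T\}\cap\Bigl\{\sup_{t\in[0,T]}\lVert u-u^n\rVert_V^2\ge\delta\Bigr\}\right)
\end{equation*}
yields a bound below $\eta$ for $n$ large, which proves the claim since $\delta$ and $\eta$ were arbitrary.

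The main obstacle is precisely this passage from the random time $\tau_m$ to the fixed time $T$: the stopping-time convergence is essentially handed to us by Theorem~\ref{stop_time}, but the lack of control of $\lVert u\rVert_V^2$ at deterministic times—the open problem highlighted in the introduction—prevents a direct $L^1$ estimate and forces the two-fold localization above. It is the globality $\mathbb{P}(\xi<\infty)=0$ that makes $\{\tau_m<T\}$ asymptotically negligible and thereby rescues the deterministic-time statement. I emphasize that this argument yields only convergence \emph{in probability}, not in expectation; upgrading to convergence of the expected supremum (Theorem~\ref{T01}) will additionally require the uniform logarithmic moment bound of Theorem~\ref{T02} to supply uniform integrability.
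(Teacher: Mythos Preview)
Your proof is correct and follows essentially the same route as the paper's: both arguments localize via the exhausting sets $\Omega_s$ and the stopping times $\tau_m$, use that $\mathbb{P}(\tau_m<T)\to0$ by globality, and pass from the $L^1$ convergence of Theorem~\ref{stop_time} to a probability bound via Markov, assembling the three contributions with a union bound. The only cosmetic differences are that the paper first truncates $\tau_m$ by $T$ and uses an $\epsilon/4,\epsilon/4,\epsilon/2$ split, whereas you work directly on $\{\tau_m\ge T\}$ with an $\eta/3$ split and spell out the continuity-from-above step more explicitly.
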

\colb

\begin{proof}[Proof of Lemma~\ref{V_prob}]
Let $\epsilon>0$. With
$\{\tilde{\tau}_n\}_{n \geq 1}$
the stopping times 
as in Theorem~\ref{stop_time},
denote $ \tau_n = \tilde{\tau}_n \wedge T $.
Then there exists $N_0$ such that $\mathbb{P}(\tau_{N_0} < T ) \leq {\epsilon}/{4}$. 
Now, choose an $s$ such that $\mathbb{P}(\Omega_s) > 1 - {\epsilon}/{2}$, where $\Omega_s$ is as 
in Theorem~\ref{stop_time}. By Theorem~\ref{stop_time}, we have
  \begin{equation}
   \lim_{n \rightarrow \infty}
       \mathbb{E}\left[ \mathbbm{1}_{\Omega_s} \sup_{t \in [0,\tau_{N_0}]} \lVert u - u^n \rVert_V^2\right] = 0
  \end{equation}
which implies the convergence in probability, i.e.,
  \begin{equation}
   \lim_{n \rightarrow \infty} \mathbb{P} \left( \mathbbm{1}_{\Omega_s} \sup_{t \in [0,\tau_{N_0}]} \lVert u - u^n \rVert_V^2 \ge \delta \right) = 0,
  \end{equation}
for any $\delta > 0$. Hence, we have
  \begin{align}
   &\mathbb{P}\left(\mathbbm{1}_{\Omega_s}\sup_{t \in [0,T]} \lVert u - u^n \rVert_V^2 \geq \delta\right) 
   \nonumber\\&\indeq
   = \mathbb{P}\left(\{\sup_{t \in [0,T]} \lVert u - u^n \rVert_V^2 \geq \delta\}  \cap  \{\tau_{N_0} < T\} \cap  \{\omega \in \Omega_s\} \right) 
   \nonumber \\& \indeq\indeq
   + \mathbb{P}\left(\{\sup_{t \in [0,T]} \lVert u - u^n \rVert_V^2 \geq \delta \}  \cap  \{\tau_{N_0} = T\}  \cap  \{\omega \in \Omega_s\} \right)
   \nonumber \\&\indeq
   \leq \mathbb{P}\left( \tau_{N_0} < T \right) + \mathbb{P}\left(\mathbbm{1}_{\Omega_s}\sup_{t \in [0,\tau_{N_0}]} \lVert u - u^n \rVert_V^2 \ge \delta \right)
  \end{align}
and thus
  \begin{align}
   \mathbb{P}\left( \sup_{t \in [0,T]} \lVert u - u^n \rVert_V^2 \geq \delta \right) &\leq \mathbb{P} \left( \tau_{N_0} < T \right) + \mathbb{P} \left( \mathbbm{1}_{\Omega_s}\sup_{t \in [0,\tau_{N_0}]} \lVert u - u^n \rVert_V^2 \ge \delta \right) + \mathbb{P}\left( \Omega_s^c \right)
   \nonumber \\&\leq
   \frac{\epsilon}{4} + \frac{\epsilon}{4} + \frac{\epsilon}{2} = \epsilon
  \end{align}
for $n$ sufficiently large, and the proof is concluded.
\end{proof}

\begin{proof}[Proof of Theorem~\ref{T02}]
From the infinite dimensional version of It\^o's lemma we get
  \begin{align}
   &d(\phi(\lVert u \rVert_V^2)) + 2\nu\phi'(\lVert u \rVert_V^2)\lVert A u \rVert_H^2dt 
   \nonumber\\&\indeq
   = \phi' (\lVert u \rVert_V^2) \Big( 2\langle f,Au \rangle
   - 2\langle B(u,u),A u \rangle + \phi'(\lVert u \rVert_V^2)\lVert g(u) \rVert_{\VV}^2 \Big)dt 
   \nonumber\\&\indeq\indeq
   + 2\phi''(\lVert u \rVert_V^2) \sum_k \bigl\langle g_k(u),A u \bigr\rangle^2dt 
   + 2\phi'(\lVert u  \rVert_V^2)\bigl\langle g(u),A u \bigr\rangle dW
   \period
  \end{align}
We take the supremum up to the stopping time $\tilde{\tau}_m = \tau_m \wedge T$, where $\tau_m$ is introduced in Theorem~\ref{stop_time}. Denoting $\Omega_m = \{ \omega \in \Omega: \tilde{\tau}_m = T  \}$, we see that $\Omega_m \uparrow \Omega$ as $m \rightarrow \infty$ by Theorem~\ref{stop_time}. By taking the expectation on $\Omega_m$ and, suppressing $\mathbbm{1}_{\Omega_m}$ for simplicity of notation, we get
  \begin{align}
   \label{eqn_1}
   &\mathbb{E}\left[\sup_{[0,\tilde{\tau}_m]}\phi(\lVert u \rVert_V^2)\right] 
   + 2\nu\mathbb{E}\left[\int_0^{\tilde{\tau}_m} \phi'(\lVert u \rVert_V^2)\lVert A u \rVert_H^2ds\right]
   \nonumber\\&\indeq 
   \leq \phi'(\lVert u_0 \rVert_V^2) + \mathbb{E}\left[ \int_0^{\tilde{\tau}_m} 
     (T_1 + T_2 + T_3 + T_4)  ds \right] 
   + 2\mathbb{E}\left[\sup_{s\in [0,\tilde{\tau}_m]} \left \lvert \int_0^s T_0 dW_s \right \rvert\right]
  \end{align}
where we denoted
  \begin{align}
   T_0 &= 2 \phi^{\prime}(\lVert u \rVert_V^2) |\langle g(u),A u \rangle| \\
   T_1 &= 2 \phi^{\prime}(\lVert u \rVert_V^2)|\langle B(u,u),Au \rangle| \\
   T_2 &= 2 \phi^{\prime}(\lVert u \rVert_V^2)| \langle f, Au \rangle | 
   \leq 2\phi^{\prime}(\lVert u \rVert_V^2)\lVert f \rVert_H \lVert Au \rVert_H
   \leq C\phi^{\prime}(\lVert u \rVert_V^2)\lVert f \rVert_H^2 + \frac{\nu}{8}\phi^{\prime}(\lVert u \rVert_V^2) \lVert Au \rVert_H^2\\
   T_3 &= \phi^{\prime}(\lVert u \rVert_V^2) \lVert g(u) \rVert_{\VV}^2
   \leq C \phi^{\prime}(\lVert u \rVert_V^2) (1 + \lVert u \rVert_V^2)\\
   T_4 &= 2| \phi^{\prime\prime}(\lVert u \rVert_V^2) \langle g(u),A u \rangle|^2
   \leq C | \phi^{\prime\prime}( \lVert u \rVert_V^2 )| \lVert u \rVert_V^2( 1 + \lVert u \rVert_V^2)
  \end{align}
where $C$ is allowed to depend on $K_j$, for $j=0,1,2$, and $K_Y$.
Appealing to the BDG inequality, we have
  \begin{equation}
   \mathbb{E}\left[\sup_{s \in [0,\tau_m] } \left \lvert \int_0^s T_0dW \right \rvert \right]
   \leq C\mathbb{E}\left[\left( \int_0^{\tau_m} \bigl\lvert \phi'(\lVert u \rVert_V^2) \bigr\rvert^2 \lVert g(u) \rVert_{\VV}^2 \lVert u \rVert_V^2 ds \right)^{1/2}\right]
  \end{equation}
and thus, using the
Lipschitz condition on $g(u)$,
  \begin{align}
   &\mathbb{E}\left[\sup_{s \in [0,\tau_m]} \left\lvert \int_0^s T_0 dW \right\rvert\right]
   \leq C\mathbb{E}\left[\left( \int_0^{\tau_m} \frac{1}{(1 + \lVert u \rVert_V^2)^2} ( 1 +  \lVert u \rVert_V^2 ) \lVert u \rVert_V^2 ds   \right)^{1/2}\right]
   \leq C(T)
   \period
  \end{align}
Next, we estimate the term $T_1$ as
  \begin{align}
   T_1 &= 2 \phi^{\prime}(\lVert u \rVert_V^2)  \lvert\langle B(u,u),A u \rangle \rvert 
   \\ \nonumber 
   &\leq 2 \phi^{\prime}(\lVert u \rVert_V^2) \lVert u \rVert_H^{1/2} \lVert u \rVert_V^{1/2} \lVert u \rVert_V^{1/2}\lVert Au \rVert_H^{3/2}
   \\ \nonumber 
   &\leq C\phi^{\prime} (\lVert u \rVert_V^2) \lVert u \rVert_H^2\lVert u \rVert_V^4 + \frac{1}{4}\phi^{\prime}(\lVert u \rVert_V^2)\lVert Au \rVert_H^2
   \\ \nonumber 
   &\leq C\lVert u \rVert_H^2\lVert u \rVert_V^2 + \frac{1}{4}\phi^{\prime}(\lVert u \rVert_V^2)\lVert Au \rVert_H^2,
  \end{align}
where we note that by 
\eqref{EQ25}
  \begin{equation}
   \mathbb{E}\left[\int_0^T \lVert u \rVert_V^2\lVert u \rVert_H^2 dt \right]
   \leq M(\lVert u_0 \rVert_H^4,\lVert f \rVert_{V'}^4,T)
   \period
  \end{equation}
By combining all the estimates and writing out $\mathbbm{1}_{\Omega_m}$ explicitly, we obtain
  \begin{equation}
   \mathbb{E} \left[\mathbbm{1}_{\Omega_m} \sup_{[0,\tilde{\tau}_m]} \phi(\lVert u  \rVert_V^2)\right] \leq C(f,g,u_0,T)
   \period
  \end{equation}
By letting $m \rightarrow \infty$ and appealing to the monotone convergence theorem, we get
  \begin{equation}
   \mathbb{E} \left[\sup_{[0,T]} \phi(\lVert u  \rVert_V^2)\right] \leq C(f,g,u_0,T)
  \end{equation}
and the proof is concluded.
\end{proof}

\cole
\begin{Lemma} 
\label{L01}
Let $u^n$ be 
as in Definition~\ref{D02}. Then we have
  \begin{equation}
   \mathbb{E}\left[\sup_{[0,T]} \log( 1 + \lVert u^n \rVert_V^2)\right] \leq C(f,g,u_0,T)
   \label{EQ08}
  \end{equation}
and
  \begin{equation}
   \mathbb{E}\left[\sup_{[0,T]}  \log(1 + \lVert u - u^n \rVert_V^2)\right] \leq C(f,g,u_0, T),
   \label{EQ09}
  \end{equation}
for all $n\in{\mathbb N}$.
\end{Lemma}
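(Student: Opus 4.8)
The plan is to treat the two assertions of the lemma separately, deriving \eqref{EQ08} by rerunning the It\^o argument of Theorem~\ref{T02} for the Galerkin system and then obtaining \eqref{EQ09} from \eqref{EQ08} and Theorem~\ref{T02} by elementary logarithmic inequalities. For \eqref{EQ08} I would apply the infinite-dimensional It\^o lemma to $\phi(\lVert u^n\rVert_V^2)$, with $\phi(x)=\log(1+x)$, using the equation \eqref{galerkin_short}. The crucial structural observation is that, since $u^n\in H_n={\mathcal L}\{e_1,\ldots,e_n\}$ and each $e_k$ is an eigenfunction of $A$, we have $Au^n\in H_n$ and hence $P_n Au^n=Au^n$. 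Consequently every projection $P_n$ paired against $Au^n$ may be discarded, that is $\langle P_n B(u^n),Au^n\rangle=\langle B(u^n),Au^n\rangle$, $\langle P_n f,Au^n\rangle=\langle f,Au^n\rangle$, and $\langle P_n g_k(u^n),Au^n\rangle=\langle g_k(u^n),Au^n\rangle$. The It\^o expansion therefore yields exactly the decomposition into the terms $T_0,\dots,T_4$ appearing in the proof of Theorem~\ref{T02}, with $u$ replaced by $u^n$ throughout.

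I would then estimate these terms verbatim as in Theorem~\ref{T02}, using $\phi'(x)=1/(1+x)$ and $\phi''(x)=-1/(1+x)^2$: the terms $T_2$, $T_3$, and $T_4$ are controlled by the sublinear growth of $f$ and $g$ together with the decay of $\phi'$ and $\phi''$, which renders the corresponding integrands bounded; the martingale term is handled by the Burkholder--Davis--Gundy inequality \eqref{def2}, where the factor $|\phi'(\lVert u^n\rVert_V^2)|^2=1/(1+\lVert u^n\rVert_V^2)^2$ again makes the integrand bounded and gives a constant $C(T)$; and the nonlinear term $T_1$ is treated by the same interpolation inequality, after which the bad factor $\tfrac14\phi'(\lVert u^n\rVert_V^2)\lVert Au^n\rVert_H^2$ is absorbed into the dissipative term on the left and there remains $C\lVert u^n\rVert_H^2\lVert u^n\rVert_V^2$. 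The only external input is the energy bound \eqref{EQ25}, which controls $\mathbb{E}\bigl[\int_0^T\lVert u^n\rVert_V^2\lVert u^n\rVert_H^2\,ds\bigr]$ uniformly in $n$, playing here the role that \eqref{EQ24} played for $u$. Since $u^n$ solves a finite-dimensional stochastic differential equation, it exists on all of $[0,T]$ and, because the $V$- and $H$-norms are equivalent on $H_n$, it has finite moments of all orders; this makes the passage to expectations legitimate (if desired, through the same stopping-time localization and monotone convergence used in Theorem~\ref{T02}), and the resulting constant depends on $f$, $g$, $u_0$, and $T$ but not on $n$, giving \eqref{EQ08}.

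For \eqref{EQ09} no further stochastic analysis is needed. Beginning from $\lVert u-u^n\rVert_V^2\le 2\lVert u\rVert_V^2+2\lVert u^n\rVert_V^2$ and using the elementary inequalities $\log(1+a+b)\le\log(1+a)+\log(1+b)$ and $\log(1+2x)\le\log 2+\log(1+x)$, valid for $a,b,x\ge0$, one obtains the pointwise bound
\begin{equation*}
\log(1+\lVert u-u^n\rVert_V^2)\le 2\log 2+\log(1+\lVert u\rVert_V^2)+\log(1+\lVert u^n\rVert_V^2).
\end{equation*}
Taking the supremum over $[0,T]$ and then the expectation, and bounding the term in $u$ by Theorem~\ref{T02} and the term in $u^n$ by \eqref{EQ08}, yields \eqref{EQ09}.

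I expect the main difficulty to be bookkeeping rather than conceptual: one must check that at every occurrence the projection $P_n$ is indeed paired against $Au^n\in H_n$ and can be dropped, so that the delicate \emph{cancellation-free} estimate of the nonlinear term $T_1$ carries over from Theorem~\ref{T02} without modification, and that the uniformity in $n$ of the final constant is preserved (it comes from \eqref{EQ25}, even though the mere finiteness of the moments used to justify the expectation exploits the finite dimension of $H_n$). Once these points are verified, the proof of \eqref{EQ08} is a faithful copy of that of Theorem~\ref{T02}, and \eqref{EQ09} follows immediately.
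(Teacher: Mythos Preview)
Your proposal is correct and follows exactly the approach indicated in the paper: the paper's proof simply states that \eqref{EQ08} ``follows the same steps as the proof of Theorem~\ref{T02}'' and that \eqref{EQ09} ``is a consequence of \eqref{EQ10} and \eqref{EQ08}.'' You have filled in precisely the details the paper omits---the observation that $Au^n\in H_n$ allows one to drop $P_n$ when paired against $Au^n$, the appeal to \eqref{EQ25} in place of \eqref{EQ24}, and the elementary logarithmic inequalities that make \eqref{EQ09} follow from \eqref{EQ10} and \eqref{EQ08}.
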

\colb

{\begin{proof}[Proof of Lemma~\ref{L01}]
The proof 
of \eqref{EQ08}
follows the same steps as the proof of
Theorem~\ref{T02} and it is thus omitted.
The inequality~\eqref{EQ09} is a consequence of 
\eqref{EQ10}
and
\eqref{EQ08}.
\end{proof}

%

Now, we are ready to prove the first stated main result, Theorem~\ref{T01}.


\begin{proof}[Proof of Theorem~\ref{T01}]
Let $\epsilon\in(0,1)$.
By \eqref{EQ09}, we have
  \begin{equation}
   \sup_{[0,T]}  \log(1 + \lVert u - u^n \rVert_V^2)^{1-\epsilon} \rightarrow 0
   \label{EQ03}
  \end{equation}
in probability as $n \rightarrow \infty$. Moreover, using Lemma~\ref{L01},
  \begin{equation}
   \mathbb{E}\left[ \sup_{[0,T]}  (\log(1 + \lVert u - u^n \rVert_V^2)\right] \leq M(u_0,f,g,T)
   \period
   \label{EQ06}
  \end{equation}
Denoting
  \begin{equation}
   U_n
   =
   \sup_{[0,T]}  \log(1 + \lVert u - u^n \rVert_V^2)^{1-\epsilon}
   \label{EQ04}
  \end{equation}
we have by \eqref{EQ06}
  \begin{equation}
   \mathbb{E}\left[ U_n^{1/(1-\epsilon)}\right] \leq M(u_0,f,g,T)   
   \label{EQ05}
  \end{equation}
while \eqref{EQ09} gives
  \begin{equation}
   U_{n}^{1/(1-\epsilon)}
   \to 0
   \label{EQ07}
  \end{equation}
in probability.
Using 
the de~la~Vall\'ee-Poussin
criterion for uniform integrability (see e.g.~\cite{D13}), we get that
$U_n\to 0$ in $L^1$
as $n \rightarrow \infty$ and Theorem~\ref{T01} is proven.
\end{proof}

\section*{Acknowledgments}
We would like to thank R. Mikulevicius for many useful comments and
discussions. I.K. was supported in part by the NSF grant DMS-1311943,
while K.U. and M.Z. were supported in part by the NSF grant
DMS-1109562.

\nnewpage

\end{document}